\newcommand{\norm}[1]{\left\Vert#1\right\Vert}
\newtheorem{theorem}{Theorem}
\newtheorem{lemma}[theorem]{Lemma}
\newtheorem{proposition}[theorem]{Proposition}
\newenvironment{definition}[1][Definition]{\begin{trivlist}
\item[\hskip \labelsep {\bfseries #1}]}{\end{trivlist}}
\newenvironment{remark}[1][Remark]{\begin{trivlist}
\item[\hskip \labelsep {\bfseries #1}]}{\end{trivlist}}
\def\cal{\mathcal}
\begin{document}


\title{Optimal strategies for driving a mobile agent  in a ``guidance by repulsion'' model}

\date{}

\maketitle
 \vspace{-2cm} 

\begin{center}
\author{R. Escobedo\textsuperscript{1,2,3,*},
A. Iba\~nez\textsuperscript{2},
E.Zuazua\textsuperscript{4}
}

\

{\it

$^1$CRCA -- Centre de Recherches sur la Cognition Animale,\\
Universit\'e Paul Sabatier, 118 Route de Narbonne, 31062 Toulouse, France.
\\
$^2$BCAM -- Basque Center for Applied Mathematics,
Alda. Mazarredo 14,
48009 Bilbao, Spain.
\\
$^3$AEPA-Euskadi,
Pza. Celestino Mar\'{\i}a del Arenal 14,
48015 Bilbao, Spain.
\\
$^4$Departamento de Matem\'aticas, Universidad Aut\'onoma de Madrid,
Cantoblanco, 28049 Madrid, Spain.
\\
$^*$corresponding author: escobedor@gmail.com
}

\end{center}

\begin{abstract}
We present a {\it guidance by repulsion} model based on a driver-evader interaction where
the driver, assumed to be faster than the evader, follows the evader but cannot be
arbitrarily close to it, and the evader tries to move away from the driver beyond a short
distance.
The key ingredient allowing the driver to guide the evader is that the driver is able to
display a circumvention maneuver around the evader, in such a way that the trajectory of
the evader is modified in the direction of the repulsion that the driver exerts on the
evader.
The evader can thus be driven towards any given target or along a sufficiently smooth
path by controlling a single discrete parameter acting on driver's behavior.
The control parameter serves both to activate/deactivate the circumvention mode and to
select the clockwise/counterclockwise direction of the circumvention maneuver. 
Assuming that the circumvention mode is more expensive than the pursuit mode, and that
the activation of the circumvention mode has a high cost, we formulate an optimal control
problem for the optimal strategy to drive the evader to a given target.
By means of numerical shooting methods, we find the optimal open-loop control which
reduces the number of activations of the circumvention mode to one and which minimizes
the time spent in the active~mode.
Our numerical simulations show that the system is highly sensitive to small variations of
the control function, and that the cost function has a nonlinear regime which contributes
to the complexity of the behavior of the system, so that a general open-loop control
would not be of practical interest.
We then propose a feedback control law that corrects from deviations while preventing
from an excesive use of the circumvention mode, finding numerically that the feedback
law significantly reduces the cost obtained with the open-loop control.

\end{abstract}
\

\smallskip
\noindent \textbf{Keywords:} Guidance by repulsion, Driver-evader agents, Optimal strategies, Feedback control law, Nonlinear dynamics, Numerical simulations


\section{Introduction}
\label{I}
``{\it Follow me}'' (FM) is probably the most natural strategy to solve the guidance
problem, that is, to guide something or somebody along a given trajectory or towards a
specific target, whether physically along geographical paths (streets, roads, buildings),
conceptually (language learning, politics) or even spiritually (religion, social
networks). The success of the FM strategy is based on the effect of the attraction that
the guide (the leader, the driver) exerts on the guided (followers, driven).
Also successfull is the ``{\it Do as I do}'' strategy, based on imitation (of behavior)
or alignment (of opinion), and used in the above mentioned fields, among others (social
learning).

Less expected is the effectiveness of the opposite strategy, namely, ``{\it move away
from me}'' (MA), based on repulsive interactions. That repulsion can serve to
guide something or somebody is shown by nature itself, not only by means of gradient
fields (electromagnetic, temperature or chemical fields), where attraction towards high
densities can be viewed as repulsion from low densities, but also by specifically
repelling targets or agents.
In neural development, axonogenesis takes place by combining attractive and repulsive
guidance, so that the axon growth follows guidance cues presented by chemoattractant
and chemorepellent molecules located in the environment of the
cell~\cite{Tessier-Lavigne-Goodman1996,Rorth2011}. In animal herding, sheepdogs are
used to guide sheep flocks through a repulsive force that dogs exert on
sheep~\cite{CoppingerFeinstein2015}.
Guidance cues can also be magnetic, as in drug
targeting~\cite{SenyeiEtAl-1978,SharmaEtAl2015},
thermotactic, as in sperm guidance~\cite{BahatEtAl2012}, cognitive, as in crowd motion
and traffic flow~\cite{Moussaid-Helbing-Theraulaz2011,Bellomo-Dogbe2011} or in opinion
formation~\cite{MoussaidEtAl2013}, acoustic signals, as in animal alarm calls or
instrumental conditioning~\cite{CoppingerFeinstein2015}, food trail pheromones in
ants~\cite{GarnierEtAl2013} (chemical at a scale larger than the cell), etc.

Attractive and alignment guidance problems are being studied for a long time by means
of agent based
models~\cite{Vicsek2012,Helbing2001,D'Orsogna2006,Gueron1996,Levine2000,MogilnerEtAl2003,
PereaEtAl2009,LukemanEtAl2010,Sprott2009,ZhdankinSprott2010,Chen2006,GaziPassino2003,
GaziPassino2004,Shi-Xie2011,Liu2005,GaziPassino2011,EscobedoEtAl2014}, with attention to
guidance by leadership~\cite{Shen2007,CouzinEtAl-2005,Aureli-Porfiri2010}, and optimal
strategies to minimize time guidance (optimal evacuation times) or distance travelled
by the agents have been found for several systems~\cite{Coron2007,Sontag2013,LenhartWorkman2007,
CaponigroEtAl2013,CaponigroEtAl2015}.

Very recently, in 2015, the optimal strategy for a flocking model to reach a target
point or to follow a given trajectory through attractive and alignment guidance has
been presented~\cite{Borzi-Wongkaew2015}.
In this model, individuals from the flock interact through attractive-repulsive and
alignment forces with the rest of individuals. Interactions are symmetric, except
for one specific individual --the leader-- which exerts on each other individual an
extra attractive force. The result is that the leader is followed by the flock, so that,
by controlling the behavior of the leader, a FM strategy can be used to make the flock
reach a given target or move along a given trajectory.

Repulsive forces in attractive-repulsive models have mostly been considered for
collision avoi\-dance with obstacles or with other agents. Guidance by repulsion has
received much less attention, which is reduced, to our knowledge, to the above mentioned
axonogenesis and animal herding. Also very recently, in 2015, an agent based model has
been introduced to describe a so-called defender-intruder interaction, where repulsion
is used by a defender to expel an intruder away as far as possible from a protected
target~\cite{Wang-Li2015}. The authors in~\cite{Wang-Li2015} find an optimal MA strategy,
which not always consists in approaching the intruder as close as possible, but to simply
drive the intruder away beyond a short distance of security. Repulsion in the
intruder-defender interaction is qualitatively different than in collision avoidance or
in interception~\cite{GhoseEtAl2006,Muro2011}, where the attractive and/or alignment
forces determine (most of) the behavior.

Defender-intruder problems fall into the category of
``conflicting interactions''~\cite{Wang-Li2015}, which are well described by the
classical pursuit-evasion (PE) framework~\cite{Isaacs1965,Nahin2007}. The simplest
scenario for a PE interaction consists of a single pursuer that follows and tries to
capture a single evader that tries to escape to infinite from the
pursuer~\cite{Wang-Li2015}.

Although our interest does not focus on conflicting interactions, we adopt here
the PE framework. The guidance by repulsion can indeed be described with the simple
two-agents PE framework, provided two considerations are taken into account:
first, the guide is not exactly a pursuer, as it often separates from the direction
towards the evader and the guide cannot be arbitrarily close to the evader,
and second, the agent to be guided is not exactly an evader, as it doesn't try
necessarily to escape to infinite but simply moves away a short distance from the
repelling guide.

We present here a guidance by repulsion model based on the two-agents PE framework.
We will refer to the guiding agent as {\it the driver}, which tries to drive the
{\it evader}. The driver thus follows the evader but cannot be arbitrarily close to it.
This is especially interesting if the driver cannot approach the evader or contact
between agents should be avoided (because of chemical reactions, animal conflict, etc).
The evader moves away from the driver but doesn't try to escape beyond a not so large
distance. The driver is of course faster than the evader. At a critical short
distance, the driver can display a circumvention maneuver around the evader that forces
the evader to change the direction of its motion. Thus, by adjusting the onset and
offset of the circumvention maneuver, the evader can be driven towards a desired target
or along a given trajectory. Our goal is to find optimal strategies to drive the
evader in the most efficient way.

We use an inertial model where interactions between agents take place through asymmetric
newtonian forces. The asymmetry consists in that one agent is attracted and repulsed,
while the other is simply repulsed. This kind of interaction has been coined
``anti-newtonian'' by Sprott~\cite{Sprott2009} and others~\cite{ZhdankinSprott2010,Wang-Li2015}.
Thus, velocities are not constant (they depend on the state of the system), and no
alignment forces are considered.

We denote by $D$ and $E$ (and indexes $d$ and $e$) the driver and the evader
agents respectively. Both agents obeys the Newton's second law, that is,
$\dot{\vec{u}}_i(t) = \vec{v}_i(t)$ and $m_i \dot{\vec{v}}_i(t) = \vec{\cal F}_i(t)$,
where
$\vec{u}_i(t) = (u_i^x(t),u_i^y(t)) \in \mathbb{R}^2$, 
$\vec{v}_i(t) = (v_i^x(t),v_i^y(t)) =  \in \mathbb{R}^2$, $m_i \in \mathbb{R}^+$ and
$\vec{\cal F}_i(t) = ({\cal F}_i^x(t),{\cal F}_i^y(t)) \in \mathbb{R}^2$ denote the
position vector, the velocity vector and the mass of agent~$i$, and the resultant force
to which agent~$i$ is subject, respectively, for $i=d,e$.

The force acting on the evader $\vec{\cal F}_e(t)$ has only one component, which is in
the direction of escape from the pursuer, $\vec{u}_d - \vec{u}_e$. The force acting on
the pursuer $\vec{\cal F}_d(t)$ has a component collinear to $\vec{\cal F}_e(t)$ and a
lateral perpendicular component $(\vec{u}_d - \vec{u}_e)^\bot$ which allows the pursuer
to surround the evader, therefore forcing the evader to change the direction of escape.
Here $(x,y)^\bot = (-y,x)$.

The perpendicular component of the force acting on the pursuer $\vec{\cal F}_d(t)$ can
be switched on and off by means of the control parameter $\kappa(t)$ which takes values
on $\{-1,0,1\}$. The control parameter $\kappa(t)$ is the key ingredient of the model,
as it determines the behavior of the pursuer, which in turn determines the behavior of
the evader. The resulting dynamical system can be considered as a driver-evader system
with two operating modes controlled by a single parameter.

\paragraph{Contents of the paper and sketch of the results}

Section titles are self-explanatory.

In Sec.~2, we introduce the model equations and parameters, describing in detail the
interactions between agents and with the environment. We show that the driver-evader
system can be viewed as having two operating modes controlled by a single parameter,
so that the system can be moved from one state to another in order to make the driver
to guide the evader from any point to any other point. We study the controllability
of the system, and prove that, although the system is not fully controllable, agents
positions remain asymptotically close to each other ({\it i.e.}, an agent can not go
to infinity).

In Sec.~3, we consider what are the optimal strategies which allow the driver to guide
the evader to a desired target. Activating a system has a cost, as well as keeping it
in the active mode. As an illustrating example, the driver can be viewed as a spacecraft
with two lateral propellers whose ignition process and fuel consumption are very high
with respect to the consumption of the back propeller. Our interest is in reducing the
cost by minimizing 1) the number of activation and 2) the time of use of the lateral
propellers.

To do that, we formulate an optimal control problem $(OCP)$  for a cost functional
including these two costs. Whe then find, in Sec.~3.1, the (unique) optimal strategy
allowing the reach the target in an initially active system ($\kappa(t_0)=1)$, and the
(unique) optimal strategy which reduces the length of the time interval
$[t_{\tt ON}, t_{\tt OFF}]$ during which the control parameter has a non-zero value.
This second strategy also reduces the number of manipulations of the control to two 
(one {\it switch on} and one {\it switch off}). In this minimization process, the cost of
back propellers (which are always switched on, the driver is a self-propelled agent which
is always attracted by the evader) is neglected compared to the cost of using the lateral
propellers, so that there is not (too much) concern by long trajectories or long
execution times where lateral propellers are switched off.

These open-loop controls are subject to the reproducibility of the initial and
environmental conditions and to uncertainty about the model~\cite{Coron2007,Sontag2013}.
In fact, we show here in detail that the system is highly sensitive to small variations
of the values of the control, thus suggesting the appropriateness of the use of
closed-loop controls that can afford for the random perturbations arising in real
systems.

We then present in Sec.~3.2 a feedback control law allowing to drive the evader to
any desired target with an arbitrary accuracy. This feedback law is especially advantageous as it
yields a similar cost than the open-loop controls, in real conditions ({\it i.e.}, under
perturbations). Moreover, the feedback control law provides an excellent insight for the
search of a control function yielding a substantially lower cost; as an example, we
report numerical simulations of a case in which the resulting cost is reduced almost a
60\% of the cost provided by the open-loop controls.

Finally, in Sec.~4, we present our conclusions and we comment on future directions.
 in the last
section
\section{Model formulation and first analysis}
\label{II}

\subsection{Equations and parameters}
The system consists of 8 ordinary differential equations (ODEs) and 8 initial conditions
for the components $u_i^x(t)$, $u_i^y(t)$, $v_i^x(t)$ and $v_i^y(t)$, for $i=d,e$.
Expanding the resultant force acting on the $i$th-agent
$\vec{\cal F}_i(\vec{u}_d(t), \vec{u}_e(t), \vec{v}_d(t), \vec{v}_e(t))$, the system can
be written as the following 4 vectorial ODEs with 4 initial conditions:
\begin{align}
\dot{\vec{u}}_d(t) & = \vec{v}_d(t),
\label{Eq-up}
\\
\dot{\vec{u}}_e(t) & = \vec{v}_e(t),
\\
\dot{\vec{v}}_d(t) & = {1 \over m_d}
 \left[
 - C^E_D {\vec{u}_d(t) - \vec{u}_e(t) \over \|\vec{u}_d(t)-\vec{u}_e(t)\|^2}
 \left( 1 - {\delta_c^2 \over \|\vec{u}_d(t)-\vec{u}_e(t)\|^2} \right)
 \right.
\nonumber
\\
&  \hspace{1cm} 
 \left.
 - C_{\rm R} {\delta_1^4  \over \|\vec{u}_d(t)-\vec{u}_e(t)\|^4}
 \left( \vec{u}_d(t) - \vec{u}_e(t)  - \kappa(t) \delta_2 \,
 {(\vec{u}_d - \vec{u}_e)^\bot \over \| \vec{u}_e(t) - \vec{u}_d(t)\|} \right)
 - \nu_d \vec{v}_d(t) \right],
 \label{Eq-vp}
\\
\dot{\vec{v}}_e(t) & = {1 \over m_e}
 \left[ C^D_E {\vec{u}_e(t) - \vec{u}_d(t) \over \|\vec{u}_e(t)-\vec{u}_d(t)\|^2}
  - \nu_e \vec{v}_e(t) \right],
\label{Eq-vs}
\\
\vec{u}_d(t_0) & = \vec{u}_d^0, \quad \vec{u}_e(t_0) = \vec{u}_e^0, \quad
\vec{v}_d(t_0) = 0 \quad \mbox{and} \quad \vec{v}_e(t_0) = 0.
\label{CondInis}
\end{align}
Interactions between agents are as follows.

The expression between brackets in Eq.~(\ref{Eq-vp}) consists of two terms with
respective coefficients $C_D^E$ and $C_R$. The first term corresponds to the long-range
attraction and short-range repulsion force that the evader exerts on the driver. Here
$\delta_c$ is the distance at which the attraction balances the repulsion:
if $\|\vec{u}_e(t)-\vec{u}_d(t)\| > \delta_c$, then the evader attracts the driver (and
therefore the driver accelerates towards the evader), while if
$\|\vec{u}_e(t)-\vec{u}_d(t)\| < \delta_c$, then the driver is repulsed by the evader
(and the driver decelerates in the direction opposed to the evader).

The term with coefficient $C_R$ corresponds to the circumvention force, which as a
component collinear to the attraction-repulsion interaction with the evader, given by
$\vec{u}_d - \vec{u}_e$, and a component perpendicular to this direction, denoted by
$(\vec{u}_d - \vec{u}_e)^\bot$, where $(x,y)^\bot = (-y,x)$. The circumvention force
is thus a force that attracts the driver towards one of the two sides of the evader,
where the side is determined by the sign of the parameter $\kappa(t)$.

The model considers two other critical distances, $\delta_1$, which is the (short)
distance at which the intensity of the circumvention force is effective, and $\delta_2$,
which denotes the distance between agents left by the driver during the circumvention
maneuver.
In~(\ref{Eq-vs}), the evader is simply subject to the repulsion from the driver,
which has the same expression than the force that the evader exerts on the driver but
with a different (and smaller) coefficient $C^D_E$.

Finally, both agents are subject to friction forces with the ground, which have the
same form, in the opposite direction of the motion: $-\nu_i \vec{v_i}$, for $i=d,e$.

The model can be formulated in different ways. The formulation presented
in~(\ref{Eq-up})--(\ref{CondInis}) is based on the general expressions of the
attraction-repulsion forces introduced by
Gazi \& Passino~\cite{GaziPassino2003,GaziPassino2004}, later widely used in realistic
(biological) models~\cite{Liu2005,ZhdankinSprott2010,Shi-Xie2011,GaziPassino2011,
EscobedoEtAl2014}.
Agents are prevented from collisions by means of strong short-range repulsive forces
(large exponents of ${\|\vec{u}_e(t)-\vec{u}_d(t)\|}$ in the denominator), so that,
although it is theoretically possible, in practice, the denominators are never (too
close to) zero,
and the model is in this sense well-posed. 
Another advantage of this formulation is that each ingredient of the model, especially
the circumvention behavior, appears explicitly in the equations, thus facilitating the
realistic interpretation of the contribution of each component of the model. For example,
the exponent~$-4$ of $\|\vec{u}_d-\vec{u}_e\|$ in the circumvention force (which is
large compared to the exponent~$-2$ in the attracting force) shows that the circumvention
maneuver will only be effective when both agents are sufficiently close to each other.

With this formulation, the control parameter $\kappa(t)$ appears as a factor of the
perpendicular component of the force acting on the evader, and, by the simple choice
of a value in $\{-1,0,1\}$, practically determines the behavior of the system.

\subsection{Two operating modes, a single control parameter}
Let us define the instantaneous distance between agents
$r(t) = \|\vec{u}_d(t)-\vec{u}_e(t)\|$.

Then, omitting time-dependence to lighten notation, Eqs.~(\ref{Eq-vp})--(\ref{Eq-vs}) can
be rewritten as
\begin{align}
\dot{\vec{v}}_d & = - {C^E_D \over m_d} {\vec{u}_d - \vec{u}_e \over r^2}
 \left[1 + {1 \over r^2}
 \left({C_{\rm R} \over C^E_D} \delta_1^4 - \delta_c^2 \right)  \right]
+ \kappa {\delta_1^4 \delta_2 \over r^3}
 {C_{\rm R} \over m_d} \, {(\vec{u}_d - \vec{u}_e)^\bot \over r^2}
 - {\nu_d \over m_d} \vec{v}_d,
\label{Eq-vp-d}
\\
\dot{\vec{v}}_e & = - {C^D_E \over m_e} {\vec{u}_d - \vec{u}_e \over r^2}
  - {\nu_e \over m_e} \vec{v}_e.
\label{Eq-ve-d}
\end{align}

We have solved the system~(\ref{Eq-up})-(\ref{CondInis}) numerically with several
classical methods (Runge-Kutta, adaptive or not, Crank-Nicolson), finding that a simple
explicit Euler method with time-step of order $10^{-6}$ is sufficiently effective and
provides a sufficiently accurate solution for our purposes.
Our results are based on exhaustive numerical simulations for a wide range of the
parameters preserving the significance of the model.
The behavior of the model is stable and coherent under reasonable variations of the
parameters. The results we describe here are not dataset dependent. The values we have
chosen are those providing the more illustrative figures.

Eq.~(\ref{Eq-vp-d}) shows that a necessary condition to have an effective short-range
repulsion acting on the driver is therefore that $C_{\rm R}\delta_1^4 - C^E_D \delta_c^2$
is negative, as it is the case for the dataset considered in our study. Similarly, in
order to have a faster and more reactive behavior of the driver, we assume that
$C^E_D > C^D_E$ and that $m_d$ and $\nu_d$ are sufficiently smaller than $m_e$ and
$\nu_e$ respectively.

For the numerical simulations and graphical descriptions presented here, we have
considered the following dataset for the typical values of the constant parameters of the
model: first, $m_i$ and $\nu_i$ are the mass and the friction coefficients of agent $i$
respectively, with $m_d = 0.4$, $m_e= 1$, $\nu_d = 1$ and $\nu_e = 2$;
then,  
$C_D^E = 3$ is the coefficient of the attractive-repulsive force that the evader exerts
on the driver,
$C_E^D = 2$ is the coefficient of the repulsive force that the driver exerts on the
evader, and
$C_{\rm R} = 0.5$ is the coefficient of the circumvention component of the force exerted
on the driver when $\kappa = \pm 1$.
Finally, the critical distances are as follows:
$\delta_c = 2$,
$\delta_1 = 2$
and $\delta_2 = 2$.

Parameter values are dimensionless (of order one) and chosen so that their relative
value allow the system to reproduce the behavior of a realistic driving phenomenon.
\vspace{0.3cm}

The numerical simulations allow us to establish that the system has two operating
modes, depending on the value of $\kappa$:
\begin{itemize}
 \item {\it Pursuit mode} ($\kappa = 0$): the driver $D$ pursues the evader $E$, which
 moves away from $D$. Both agents tend to move in the same direction, given by their
 acceleration vectors, along the straight line $\overline{DE}$.
 If $\kappa=0$ continuously during a sufficiently long interval of time,
 both agents' velocities converge asymptotically to the same vector of norm $v_{\rm as}$
 (due to the friction), and the driver stays at a constant distance from the evader,
 $\delta_{\rm as}$.
 See Appendix~A for an analytical estimate of these values, and the left panels in
 Fig.~\ref{modes}: the lower panel shows that, before reaching the constant value
 $v_{\rm as} \approx 0.71$, the pursuer is faster than the evader. For the values we
 have used, $\delta_{\rm as} = \sqrt{2}.$

 \item {\it Circumvention mode} ($\kappa=\pm 1$): the driver $D$ separates from the
 straight line $\overline{DE}$ and starts a circumvention maneuver around the evader~$E$
 (clockwise or counterclockwise, depending on the sign of $\kappa$).
 The response of the evader is to move away from the driver. If $\kappa=1$ continuously
 during a sufficiently long interval of time, as the driver is faster than  the evader,
 the asymptotic behavior of the system is a circular motion of the evader  around a fixed
 point and a circular motion of the driver around the circle described by the evader.
 See Fig.~\ref{modes}, where the right panels show that, for
 $t \in [0,80]$, the system tends to a periodic configuration where both  agents are
 separated by a constant distance $\delta_{\rm as}^{\rm ang} \approx 1.82$ (not  shown in
 the figure) and have the same angular velocity of norm  $\omega_{\rm as} \approx \pi/4$
 (the period $s$ of the oscillations is $s = 2\pi/\omega_{\rm as}$; bottom-right
 panel of Fig.~\ref{modes} shows that $s\approx 8$).
\end{itemize}
\begin{figure}[ht]
\vspace{-0.3cm}
\begin{center}
\epsfxsize=0.95\textwidth
\epsfbox{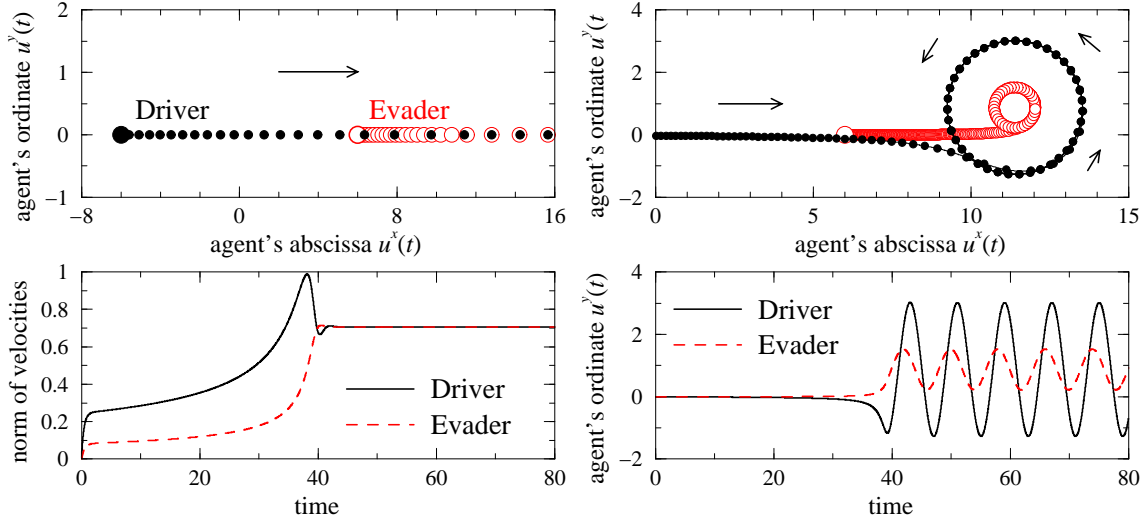} \hspace{1cm}
\vspace{-0.8cm}
\caption{The two operating modes of the system.
Left panels: pursuit mode ($\kappa = 0$). Right panels: circumvention mode ($\kappa =1$).
Initial configuration, in both cases: $\vec{u}_d = (-6,0)$, $\vec{u}_e =(6,0)$ and
$\vec{u}_T = (1,1)$, with zero initial velocities.
Upper panels: agents' trajectories. Arrows denote the direction of motion of the agents.
For sufficiently long times, the velocities converge asymptotically to a constant value
$v_s$.
When $\kappa=1$, the driver turns counterclockwise around the evader, which also turns
counterclockwise.
Left-bottom panel: time variation of the norm of the velocities $\|\vec{v}_{d,e}(t)\|$,
both reaching $v_{\rm as} \approx 0.71$ at $t \approx 38.9$.
Right-bottom panel: time variation of agents' ordinates: $u_d^y(t)<0.1$ until $t\approx 32$,
while $u_e^y(t) < 0.1$ until $t\approx 37$. For $t > 40$, periodic behavior
(of period $s \approx 8$) with constant angular velocity $\omega_{\rm as} \approx \pi/4$
and constant separation $\delta_{\rm as}^{\rm ang} \approx 1.82$.}
\label{modes}
\vspace{-0.3cm}
\end{center}
\end{figure}

When the driver is sufficiently close to the evader, the circumvention mode is effective
and triggers the circular behavior of the agents.
See the right panels of Fig.~\ref{modes}, where the driver initial position at $(-6,0)$
is so far from the initial position of the evader $(6,0)$ that, although the control is
set to $\kappa =1$ from $t_0=0$ to $t_f=100$, the first part of the trajectory is almost
a straight line. Until $t \approx 32$, both agents are almost still in the
horizontal axis: $\vec{u}_d = (5,-0.09)$, $\vec{u}_e = (9.7,0.02)$. At $t \approx 37$,
$\vec{u}_d = (8.62,-0.37)$ and $\vec{u}_e = (11.05,0.1)$, the driver is close enough to
the evader and the circular behavior becomes perceptible. See the oscillations of
$u_d^y(t)$ and $u_e^y(t)$, of period $s \approx 40/5=8$, in the right-bottom panel of
Fig.~\ref{modes}.

\begin{remark}
\label{Remark1}
{\bf 1:} When the driver is far from the evader ({\it i.e.}, $r \gg 1$), the
term between brackets in Eq.~(\ref{Eq-vp-d}) is such that $1 \gg 1/r^2$, so the first
term of this equation (which has coefficient $C_D^E$) is of order ${\cal O}(r^{-1})$,
while the term with $\kappa(t)$ is of order ${\cal O}(r^{-4})$.
Then, Eq.~(\ref{Eq-vp-d}) can be reduced to
\begin{align}
\dot{\vec{v}}_d & = -  {C^E_D \over m_d} {\vec{u}_d - \vec{u}_e \over r^2}
 - {\nu_d \over m_d} \vec{v}_d.
\end{align}
\end{remark}

\begin{remark}
\label{Remark2}
{\bf 2:} In particular, it can be observed that, when $r^3 \gg \delta_2$, then
\begin{align}
 \left\| {C_R \delta_1^4          \over m_d} { \vec{u}_d - \vec{u}_e       \over r^2}
 \right\| = {C_R \delta_1^4 \over m_d r}
 \gg
 \left\| {C_R \delta_1^4 \delta_2 \over m_d} {(\vec{u}_d - \vec{u}_e)^\bot \over r^5}
 \right\| = {C_R \delta_1^4 \delta_2 \over m_d r^4},
\end{align}
so that the term deviating the driver from the pure pursuit trajectory is negligible
with respect to the term corresponding to the attracting force exerted by the evader.
Therefore, when the driver is sufficiently far from the evader, the value of
$\kappa(t)$ has no influence on the behavior of the driver, meaning that $\kappa$ can
be set to zero.
\end{remark}

\subsection{On the controllability of the system}
The circumvention mode can be viewed as the {\it active} state of the system, where the
control \mbox{parameter} $\kappa$ is set to {\tt ON}, while the pursuit mode is the
{\it rest} state of the system, where $\kappa$ is set to {\tt OFF}.
With the appropriate combination of both modes, the driver is able to make the evader
reach any given target point or move along any (relatively smooth) given trajectory.
The resulting behavior of such combination is what we call a {\it driving} behavior.
See Fig.~\ref{tray}.
\begin{figure}[ht]
\vspace{-0.3cm}
\begin{center}
\epsfxsize=0.95\textwidth
\epsfbox{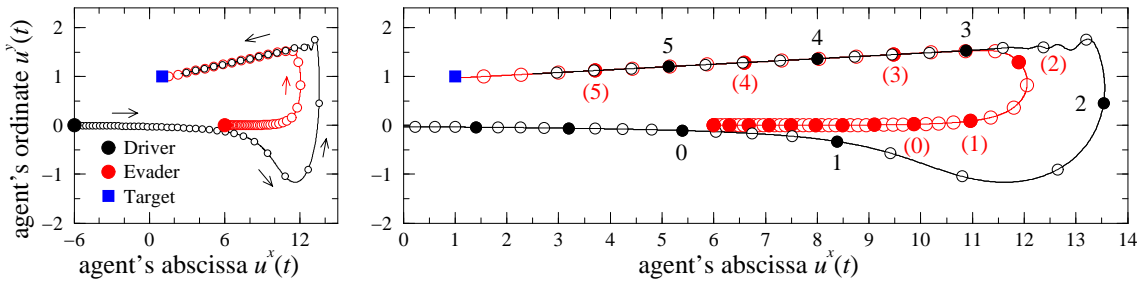} \hspace{1cm}
\vspace{-0.3cm}
\caption{Agents' trajectories (driver: small black circles; evader: large red circles)
for an admissible control function $\kappa(t)$.
Left: whole trajectories, right: zoom for $u^x(t) \in [0,15]$.
Symbols are equidistant in time; filled symbols labeled with
the same number denote same instant of time (evader's labels are shown in parentheses).}
\label{tray}
\end{center}
\vspace{-0.5cm}
\end{figure}

\noindent
This strongly suggests that the system can be controlled with a single control
parameter, $\kappa(t)$. In control theory, a system is said to be {\it fully controllable}
when, starting from any arbitrary initial state, every possible state of the system can
be reached by appropriately adjusting the control parameters
(see, {\it e.g.},~\cite{Trelat2005}).
In this sense, our numerical simulations have shown that agents cannot be separated an
arbitrarily long distance, so that the driver-evader system cannot be labelled as fully
controllable.

In fact, we prove analytically in Appendix~A that, for any initial state, the separation
between agents is bounded by above and tends to a distance (of order 1), showing that the
evader cannot escape from the driver to infinity, and the driver cannot move away from the
evader to infinity.
Moreover, numerical simulations show that agents' velocities are also bounded and that,
when $\kappa(t)$ remains unchanged for sufficiently long time intervals, both velocities
tend asymptotically to the same constant value ($v_{\rm as}$ or $\omega_{\rm as}$).
Finally, agents are prevented from occupying the same place at the same time ({\it i.e.}, 
$\vec{u}_d(t) = \vec{u}_e(t)$) by the strong short-range repulsion force that the driver
exerts on the evader, as our numerical simulations confirm.

We will say instead that the driver-evader system is {\it partially controllable}, in the
sense that each agent can be controlled separately: the driver can force the evader to
reach any point in the plane (this is shown in the successive sections), and vice-versa,
a series of targets for the evader can be selected so that the driver is driven to reach
any point in the plane.

The question arises now as how the driver-evader system behavior can be optimized to
minimize a given cost functional accounting for the use of the lateral propellers.

\section{Optimal control: two optimal open-loop controls, one feedback control law}
\label{III}

Denote by $B_\rho(T)$ a ball of radius $\rho$ centered in $T$, assume that the evader is
initially far from the driver
({\it e.g.}, $r(t_0)=\|\vec{u}_d(t_0)-\vec{u}_e(t_0)\| \gg \delta_1$), and assume also
that $D$, $E$ and the ball $B_\rho(T)$ are not aligned, that is,
$\overline{DE} \cap B_\rho(T) = \O$.

Consider now the objective of driving the evader $E$ into the target ball $B_\rho(T)$ at
a final time~$t_f$ by controlling the driver $D$ with an appropriate strategy~$\kappa(t)$.
As $D$, $E$ and $B_\rho(T)$ are not aligned, the circumvention mode has to be activated
and/or has to remain active a suitable time to modify the trajectory of the evader and
guide it towards the target ball $B_\rho(T)$.

Such an objective must appraise the cost of 1) forcing the system to leave its resting
state, and/or 2) keeping the system in an active state. This cost is given by the
functional
\begin{align}
 J(\kappa) \stackrel{def}{=} \sigma_1 N_{\rm ig}(\kappa) + \sigma_2 \, {\cal C}(\kappa),
\end{align}
where $N_{\rm ig}(\kappa)$ is the number of times that the system is forced to leave its
resting state in the time interval $[t_0,t_f]$ (that is, the number of ignition
processes, where $\kappa(t)$ changes from 0 to $\pm 1$), and $C(\kappa)$ is the total
time during which the system is active in $[t_0,t_f]$ ({\it i.e.}, the time spent with
a lateral propeller in active mode, where $\kappa(t)$ has a non zero value),
\begin{align}
 {\cal C}(\kappa) = \int_{t_0}^{t_f} | \kappa(t) | dt,
\end{align}
and $\sigma_{1,2}$ are nonnegative weights fixed to balance the contribution of each
partial cost.

An optimal control problem $(OCP)$ can then be formulated as
\begin{align}
 (OCP) \; \left\{
 \begin{array}{l}
  {\rm Min} \, J(\kappa) = \sigma_1 N_{\rm ig}(\kappa) + \sigma_2 \, {\cal C}(\kappa) \\
  \kappa \in U_{\rm ad} = \big\{ \kappa : [t_0,t_f] \to \{-1,0,1\} \mbox{ such that }
 \vec{u}_e(t_f) \in B_\rho(T) \big\}  
 \end{array},
 \right.
\end{align}
where $U_{\rm ad}$ is the set of admissible controls.

Here we solve $(OCP)$ for $N_{\rm ig}=0$ and $N_{\rm ig}=1$ in sections 3.1 and 3.2
respectively, finding the corresponding optimal open-loop controls. Our results show that
the system is highly sensitive to small variations of the conditions of the problem,
so that a general open-loop control for $N_{\rm ig} > 1$ would not be of practical
interest. Instead, we provide in section 3.3 a feedback control law for $N_{\rm ig} > 1$
that substantially reduces the cost of the open-loop controls for $N_{\rm ig} \le 1$,
and preserves the number of ignitions at a relatively low value ($N_{\rm ig}=4$; see
later).

\subsection{Optimal open-loop guidance strategies for $N_{\rm ig} \le 1$}

Solving $(OCP)$ for $N_{\rm ig}=0$ consists in minimizing ${\cal C}(\kappa)$ for
$\kappa(t_0) = \kappa_0$, with $\kappa_0 = \pm 1$. Indeed, if $\kappa(t_0)=0$, then
$\kappa \notin U_{\rm ad}$, because $D$ and $E$ are initially not aligned with the target
ball and no inition process can be used to modify the trajectory of the evader, so
$\vec{u}_e(t) \notin B_\rho(T)$ for all $t \in [t_0,+\infty)$.
Similarly, solving $(OCP)$ for $N_{\rm ig}=1$ consists in minimizing ${\cal C}(\kappa)$
for $\kappa(t)$ of the form $\kappa(t) = \kappa_0$ in a time interval
$[t_{\tt ON},t_{\tt OFF}]$ and $\kappa(t) = 0$ outside.

We thus seek two control function profiles like those sketched in Fig.~\ref{shapes}. 

\begin{figure}[ht]
\begin{center}
       \epsfxsize=0.95\textwidth
       \epsfbox{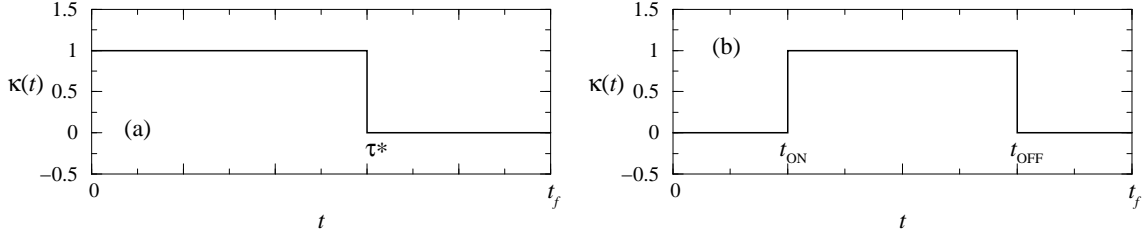} \hspace{1cm}
\vspace{-0.4cm}
\caption{Profiles of the control function $\kappa(t)$ with $\kappa_0 = 1$ for each
case (a) $N_{\rm ig}=0$ and (b) $N_{\rm ig}=1$.}
\vspace{-0.5cm}
\label{shapes}
\end{center}
\end{figure}

\subsubsection{$N_{\rm ig} = 0$:}
The following result holds:

Let $\kappa_\tau(t)$: $\mathbb{R} \to \{-1,0,1\}$ be the following step-function
in the time interval $[t_0,t_f]$,
\begin{align}
 \kappa_\tau(t) = \left\{ 
 \begin{array}{ll}
  \kappa_0 & \mbox{if } t < \tau \\
  0 & \mbox{if } t \ge \tau
 \end{array},
\right.
\label{estecost}
\end{align}
where $\kappa_0 = \pm 1$ is the initial value at time $t_0$: $\kappa_\tau(t_0)=\kappa_0$.
Then, if $t_f$ is sufficiently large, there exists an interval
$[\tau_\alpha, \tau_\omega] \subset (t_0,t_f)$ such that, for all
$\tau \in [\tau_\alpha, \tau_\omega]$, there exists a time $t \in (t_0,t_f)$ for which
the evader is in the interior of the ball of radius $r$ centered in the target $T$.
That is:
\begin{align}
\forall \; \tau \in [\tau_\alpha, \tau_\omega], \; \exists \; t \in (t_0,t_f)
\mbox{ such that } \| \vec{u}_e(t) - \vec{u}_T \| < r.
\end{align}
Moreover, if $r \to 0$, then the interval $[\tau_\alpha, \tau_\omega]$
shrinks to a single point $\tau^*$ such that there exists a time $t \in (t_0,t_f)$
for which $\vec{u}_e(t) = \vec{u}_T$. See $\tau^*$ in Fig.~\ref{shapes}(a).

The proof is based on a continuity argument applied to a (numerical) shooting method.

Fig.~\ref{shoot-1change} shows the trajectories of the evader for different values of
$\tau$. By a continuity argument, it is possible to find the values of $\tau_\alpha$,
$\tau_\omega$ and $\tau^*$ with a simple shooting method based on comparing the
direction of the velocity vector of the evader $\vec{v}_e$ with respect to the direction
towards the target when the control is switched off. See Appendix~B for a more detailed
description of the decision test of the shooting method.

\begin{figure}[ht]
\vspace{-0.2cm}
\begin{center}
       \epsfxsize=0.85\textwidth
       \epsfbox{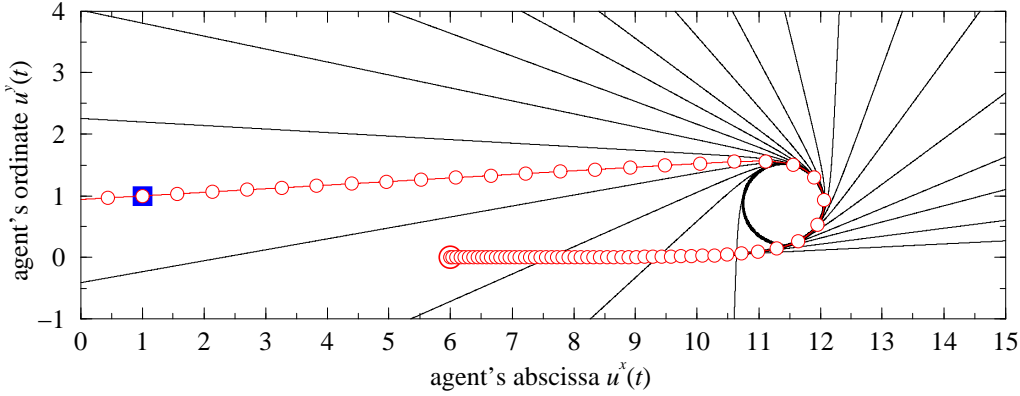} \hspace{1cm}
\vspace{-0.6cm}
\caption{Shooting method for finding the optimal value $\tau^*$ of the control function
shown in Fig.~\ref{shapes}(a), for which $\| \vec{u}_e(t) - \vec{u}_T \| < \rho$ for some
$t \in (0,t_f)$ (here we used $\rho=10^{-4}$).
Filled (blue) square denotes target's position $\vec{u}_T=(1,1)$, empty (red) circle
evader's initial position $\vec{u}_e(0)=(6,0)$; driver's initial position
$\vec{u}_d(0)=(-6,0)$ is not depicted.
Line with (red) circles denote the trajectory of the evader for the optimal value
$\tau^* = 41.15$.
Wide~(black) line denotes the accumulation circle around which the evader turns if the
control is kept to one.
Thin~(black) lines denote trajectories of the evader for the following non-equispaced
values of $\tau$:
34, 36, 37, 37.5, 38, 38.4, 38.7, 39, 39.3, 39.6, 39.9, 40.2, 40.4, 40.6, 40.8, 41, 
41.3, 41.6, 42 and 43.
}
\label{shoot-1change}
\end{center}
\end{figure}

In this case, the cost of a strategy $\kappa_\tau(t)$ given by~(\ref{estecost}) is
${\cal C}(\kappa_\tau) = \tau$ for all $\tau \in [\tau_\alpha, \tau_\omega]$, so that the
optimal strategy is $\kappa_{\tau_\alpha}(t)$. For a sufficiently small value of $r$ (we
used $r=10^{-4}$ in Fig.~\ref{shoot-1change}), the interval $[\tau_\alpha, \tau_\omega]$
colapses to the value $\tau^*$ and the cost of the optimal control is
${\cal C}(\kappa_\tau^*) = \tau^*$.

Noticeably, Fig.~\ref{shoot-1change} reveals that the system is highly sensitive to small
variations of $\tau$ around the optimal value $\tau^*$: the same variation of $\tau$
produces a larger variation of the deviation of $\vec{v}_e$ with respect to a reference
line ({\it e.g.}, the horizontal line) when $\tau$ is close to $\tau^*$ than when $\tau$
is far from $\tau^*$: for $\Delta \tau = 2$, the angular variation $\theta$ from
$\tau = 39.9$ to 42 is larger than $\pi/2$ (actually,
$\theta_{42}-\theta_{39.91} \approx 1.62$), and is
10 times smaller from $\tau = 34$ to 36 ($\theta_{36}-\theta_{34} \approx 0.16$).

\subsubsection{$N_{\rm ig} = 1$:}
In the previous case, the optimal control is set to 1 in the whole interval
$[0,\tau_\alpha)$. The possibility of using one ignition process allows us
to consider controls where the system is at rest while the driver is approaching the
evader, taking advantage of the fact that when the driver is far from the evader the
circumvention term has (practically) no influence on the trajectory of the driver (and
therefore, on the behavior of the system); see Remark~2.

This delay in the ignition process substantially reduces the cost by allowing the use of
a control function profile like the one depicted in Fig.~\ref{shapes}(b), defined by
\begin{align}
 \kappa_{t_{\tt ON}}^{t_{\tt OFF}}(t) = \left\{ 
 \begin{array}{ll}
  \kappa_0 & \mbox{if } t_{\tt ON} \le t < t_{\tt OFF} \\
  0 & \mbox{elsewhere}
 \end{array},
\right.
\end{align}
where $t_{\tt ON}$ and $t_{\tt OFF}$ are the instants of time in which the control
is switched on ($\kappa = \kappa_0 = \pm1$) and off ($\kappa = 0$) respectively.
For these control functions, the cost is given by
\begin{align}
 {\cal C}(\kappa_{t_{\tt ON}}^{t_{\tt OFF}}) =
 \int_{t_0}^{t_f} \left| \kappa_{t_{\tt ON}}^{t_{\tt OFF}}(t) \right| \, dt
 = t_{\tt OFF} - t_{\tt ON},
\end{align}
so that the $(OCP)$ problem is reduced to find the values of $t_{\tt ON}$ and
$t_{\tt OFF}$ minimizing $t_{\tt OFF}-t_{\tt ON}$.

These values are found by means of two successive (numerical) shooting methods.
Let us consider the extreme case where $\rho=0$ (numerically we used $\rho=10^{-8}$),
so that the solution of the shooting method described in the previous section for
$N_{\rm ig}=0$ is unique. 

The first shooting method consists in finding, for each value of $t_{\tt ON}$, the
(unique) value $t_{\tt OFF}^*(t_{\tt ON})$ for which there exists a time $t \in (t_0,t_f)$
such that $\vec{u}_e(t) = \vec{u}_T$. This shooting method is the one presented in the
previous case where $N_{\rm ig}=0$. In fact, in that case, $t_{\tt ON}=0$.
This procedure allows us to build a function 
${t_{\tt OFF}^*}(t_{\tt ON})$ in such a way that the function
$\kappa_{t_{\tt ON}}^{t_{\tt OFF}^*}(t)$ is {\it admissible}:
$\kappa_{t_{\tt ON}}^{t_{\tt OFF}^*} \in U_{\rm ad}$.

Then, the second shooting method consists in finding the value of $t_{\tt ON}$ for which
the characteristic function of the interval $[t_{\tt ON}, t_{\tt OFF}^*(t_{\tt ON})]$
minimizes ${\cal C}(\kappa_{t_{\tt ON}}^{t_{\tt OFF}})$.

\begin{figure}[ht]
\begin{center}
       \epsfxsize=0.92\textwidth
       \epsfbox{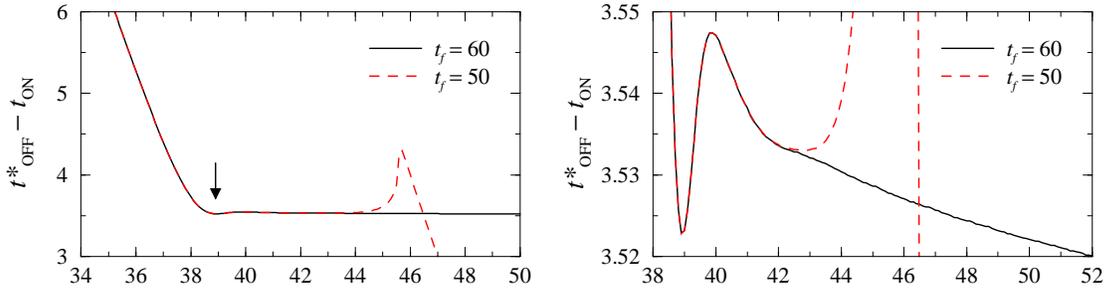} \hspace{1cm}
\vspace{-0.2cm}
\caption{Shooting method to find the optimal values $t_{\tt ON}^*$ and $t_{\tt OFF}^*$
for which $\| \vec{u}_e(t) - \vec{u}_T \| < \rho$ for some $t \in (0,t_f)$
($\rho=10^{-8}$), for two different values of $t_f$:
solid (black) line, $t_f = 60$; dashed (red) line, $t_f = 50$.
Left panel: cost function ${\cal C}(t_{\tt ON}) = t_{\tt OFF}^*(t_{\tt ON}) - t_{\tt ON}$,
exhibiting a plateau starting at $t^*_{\tt ON} \approx 38.92$ (arrow) and situated at
${\cal C}^* \approx 3.52$.
Right panel: zoom of the vertical axis, revealing the nonlinear shape of the curve with
a minimum at $t^*_{\tt ON}$, closely followed by a local maximum at
$t_{\tt ON} \approx 39.82$, and a decreasing regime for $t_f$ sufficiently large.
The relative amplitude of the nonlinearity is $\approx 7\times 10^{-3}$.}
\label{shoot-double}
\end{center}
\vspace{-0.5cm}
\end{figure}

Fig.~\ref{shoot-double} shows the cost function ${\cal C}$ as a function
of $t_{\tt ON}$: ${\cal C}(t_{\tt ON}) = t_{\tt OFF}^*(t_{\tt ON}) - t_{\tt ON}$.

When $t_f$ is sufficiently large, the cost ${\cal C}(t_{\tt ON})$ tends to a constant
value ${\cal C}^*\approx 3.52$ which is the time is takes to the driver to make the
evader turn back towards the target. See Fig.~\ref{turnbacks}, where two examples with
different ignition times $t_{\tt ON}$ yield (approximately) the same cost ${\cal C}^*$.

This value ${\cal C}^*$ constitutes a substantial reduction of the optimal cost found
for $N_{\rm ig}=0$.

In fact, Fig.~\ref{shoot-double} shows that there is a wide range of values of
$t_{\tt ON}$ yielding a similar value of the cost, so that, for a tolerance larger
than $10^{-2}$, the optimal control would not be unique. This~would allow the system
to accept another criteria or another restriction to determine the optimal strategy,
which can be for instance to minimize the total time or the total distance travelled by
the agents (and which would be equivalent to consider the cost of back-propellers).

\begin{figure}[ht]
\begin{center}
       \epsfxsize=0.95\textwidth
       \epsfbox{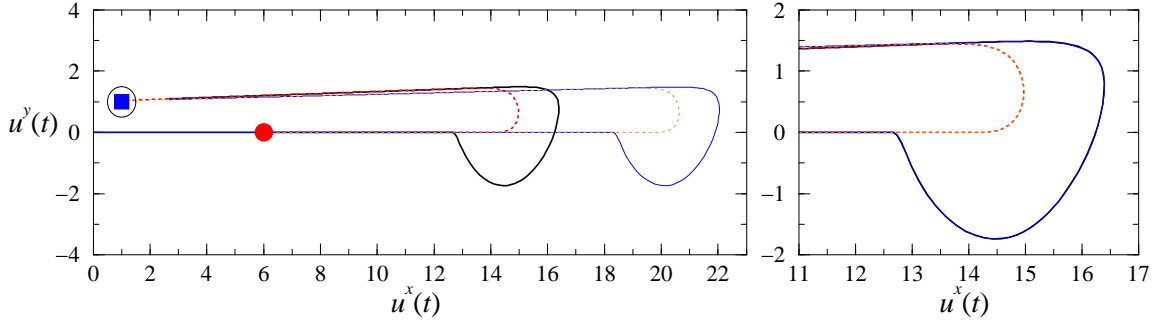}
\vspace{0.2cm}
\caption{Agents' trajectories for two {\tt ON}--{\tt OFF} controls yielding the same
cost ${\cal C}^* \approx 3.53$ in Fig.~\ref{shoot-double}.
Wide lines (short trajectories, black and red):
					$t_{\tt ON} = 42$, $t_{\tt OFF}^* = 45.5337$;
thin lines (long trajectories, blue and orange):
					$t_{\tt ON} = 50$, $t_{\tt OFF}^* = 53.5221$.
Solid lines denote the driver, dotted lines the evader.
In the short trajectory the evader reaches the target at $t_f= 65$, while it must go
until $t_f = 81$ in the long trajectory.
In both cases the turning back maneuver takes the same time ${\cal C}^*$ and has the
same shape. Right panel: trajectories during the turning back maneuver overlap when
shifted $t = t - 5.64$.} 
\label{turnbacks}
\vspace{-0.7cm}
\end{center}
\end{figure}

However, if the variations of ${\cal C}^*$ of order $10^{-3}$ are relevant, a closer
analisys shows that the first linear decreasing range in Fig.~\ref{shoot-double}, which
lasts until $t^*_{\tt ON} \approx 38.9$, is due to the fact that during this interval of
time $[0,t^*]$ the driver is far from the evader, that is,
$r = \| \vec{u}_d - \vec{u}_e \| \gg 1$, so that the term with $\kappa$ in
Eq.~(\ref{Eq-vp}) can be neglected (it is of order ${\cal O}(r^{-4})$) with respect to
the terms corresponding to the attracting force (of order ${\cal O}(r^{-2})$).
This means that switching on the control before $t^*_{\tt ON}$ doesn't contribute
effectively to reach the target and is a waste of resources. In fact, the optimal
value of the cost is reached at $t^*_{\tt ON}$: ${\cal C}(38.9)=3.5228$.
When $t_f$ is so short that the turning back maneuver can not be completed, then
$t_{\tt OFF}^*(t_{\tt ON}) = t_f$, so that the cost function ${\cal C}(t_{\tt ON})$ 
decreases (again linearly with $t_{\tt ON}$) to zero (but of course the target is not
reached). Note that, for all $t_{\tt ON}$, the value $t_{\tt OFF}^*(t_{\tt ON})$ will
exist provided $t_f$ is sufficiently large.

Interestingly, ${\cal C}(t_{\tt ON})$ exhibits an abrupt nonlinearity of small amplitude
at $t^*_{\tt ON}$; see the right panel of Fig.~\ref{shoot-double}. In a pure pursuit
regime, the velocity of the agents both converge to a constant value $v_{\rm as}$ due to
the friction with the ground. See the lower left panel in Fig.~\ref{modes}.
The nonlinearity is located precisely at the time where the evader reaches this
constant velocity ($v_{\rm as} \approx 0.71$). The curve of
${\cal C}(t_{\tt ON})$ reaches a (local) maximum at a value of $t_{\tt ON}$ slightly
larger than $t^*_{\tt ON}$, due to that the driver is close to the evader and both
agents' velocities have converged to $v_{\rm as}$.
For larger values of $t_{\tt ON}$ in the horizontal plateau of Fig.~\ref{shoot-double},
the turning back maneuver is practically identical in space and time; see
Fig.~\ref{turnbacks}, especially the right panel, where we show that two different
values of $t_{\tt ON}$ in the plateau yield two turning back maneuvers that practically
overlap.
The slight decrease of the plateau shown in the right Panel of Fig.~\ref{shoot-double}
is due to the fact that turning back maneuvers are less consuming the farther from the
target they takes place, because the angle at which the control is switched off is
smaller. This situation is reversed (that is, the plateau increases) when the target
point is above the turning back region --{\it e.g.}, at $(x,y)=(1,2)$.

Although the relative amplitude of the nonlinearity of ${\cal C}(t_{\tt ON})$ is very
small (0.024 with respect to 3.52), it unexpectedly adds an important complexity to the
study of the system: the minimum located at $t^*_{\tt ON}$ can be global or local
depending on $t_f$, therefore complicating the search for optimal directions of descent
in numerical minimization methods.

Moreover, the high sensitivity of the system detected in the previous section is also
in action if control functions like $\kappa_{t_{\tt ON}}^{t_{\tt OFF}^*}(t)$ are used
when small variations of $t^*_{\tt OFF}$ with respect to $t_{\tt ON}$ can~occur.

This section shows that, despite a quite predictive general behavior of the system (no
signs of chaos have been detected), its high sensitivity strongly suggests the use of
closed-loop controls.

\subsection{A feedback control law for $N_{\rm ig} > 1$}
\label{FeedbackControlLaw}

For systems that are subject to conditions of high sensitivity like those described in
section~3.1, closed-loop or feedback controls offer the possibility of correcting
instantaneously the state of the system for deviations from the desired
behavior~\cite{Sontag2013,Coron2007}. Moreover, the explicit form of the control as a
function of time need not to be known {\it a priori} in the whole time interval
$[t_0,t_f]$. In~turn, feedback laws have to pay the cost of continuously monitoring the
position and velocity of the agents.

We present here a feedback control law based on the following observations:
\begin{enumerate}
 \item In real situations, the orientation of the vector $\vec{v}_e(t)$ used in the
 shooting method can be difficult to observe with the accuracy required by the high
 sensitivity of the system.
 
 Instead, the alignment $a(t)$ of the driver $D$ and the
 evader $E$ with the target point $T$ is easier to observe and is a good approximation
 of $\vec{v}_e(t)$.

 \item When the driver is sufficiently far from the evader, $\kappa(t)$ can be set to
 zero (Remarks~1 \&~2).
\end{enumerate}

The instantaneous information about the state of the system is processed in real time
to determine the distance separating both agents $r(t)$ and the alignment~$a(t)$.
The alignment~$a(t)$ can be charaterized by the following scalar product
(time dependence is omitted to lighten notation):
\begin{eqnarray}
a(t) = (\vec{u}_T - \vec{u}_d) \cdot (\vec{u}_e - \vec{u}_d)^\bot
= (u_T^x - u_d^x)(u_d^y - u_e^y) + (u_T^y -u_d^y)(u_e^x-u_d^x). \quad
\end{eqnarray}
The sign of $a(t)$ reveals in which half-plane the target $T$ is with respect to the line
$\overline{DE}$, and can be used to determine the sign of $\kappa(t)$. 

Moreover, $|a(t)|$ is an instantaneous measure of how urgently the control must be set
to {\tt ON}. Let us consider a maximal tolerance of deviation $\bar{a}$. The feedback
control law is based on the idea that when $|a(t)|$ is smaller than $\bar{a}$, it is
possible to consider that $T$ is practically on the line $\overline{DE}$, so that
$\kappa(t)$ can be set to {\tt OFF}, thus saving cost, and when $|a(t)| > \bar{a}$, the
deviation is excessive and the control must be set to {\tt ON}.
The tolerance of deviation $\bar{a}$ is an effective bound for both the angle and the
intensities of the velocities
($a = \| \vec{u}_T - \vec{u}_d \| \, \| \vec{u}_e - \vec{u}_d\|
\cos (\vec{u}_T - \vec{u}_d,\vec{u}_e - \vec{u}_d)^\wedge$), so~$|a| < \bar{a}$ restricts
also the velocities of the agents: a slightly deviated evader at a high speed can miss
the target as well as a largely deviated evader at a lower speed.

Note also that when $|a(t)| < \bar{a}$, the control can be switched off provided the
evader and the target are at the same side with respect to the driver, in order to
prevent the driver from driving the evader away from the target; that is, $\kappa(t)$
can be set to zero only if the scalar product
$(\vec{u}_e - \vec{u}_T) \cdot (\vec{u}_e - \vec{u}_d)$ is negative.

Finally, Remark~2 is introduced into the feedback law by means of the characteristic function
\begin{eqnarray}
{\cal X}(t) = \left\{
\begin{array}{cl}
 0 & \mbox{ if } r^3(t) \gg \delta_2, \\
 1 & \mbox{ if not},
\end{array}
\right.
\end{eqnarray}
which serves to switch off the control when the driver is far ($r^3(t) \gg \delta_2$)
from the evader.

The feedback control law can then be written as follows:
\begin{eqnarray}
\kappa_{\rm F}(t) = {\cal X}(t) \times \left\{
\begin{array}{cl}
 0 & \mbox{ if } |a(t)| \le \bar{a}
	\mbox{ and }(\vec{u}_e - \vec{u}_T) \cdot (\vec{u}_e - \vec{u}_d) < 0, \\
 {\rm sign}\{a(t)\} & \mbox{ if } |a(t)| > \bar{a}
	\; \mbox{ or }\; \, (\vec{u}_e - \vec{u}_T) \cdot (\vec{u}_e - \vec{u}_d) \ge 0 .
\end{array}
\right.
\label{fidbaclaw}
\end{eqnarray}

We have solved the system (\ref{Eq-up})--(\ref{CondInis}) numerically using the condition
$r^3(t) > 3 \delta_2 / 2$ to have ${\cal X}(t)=0$ in expression~(\ref{fidbaclaw}). We have
considered an alignment tolerance $\bar{a} = 4 \times 10^{-1}$. The rest of values are as
in previous sections. Let us refer to this case as case (a).

The result is that the feedback law reduces substantially the cost obtained with the
open-loop control in Sec.~3.1: ${\cal C}_{\rm F}^{\rm (a)} = 1.43$, so an improvement of
60\% with respect to ${\cal C}^* = 3.53$, with however a slight increase of the number of
ignition processes, from $N^*_{\rm ig}=1$ to $N^{\rm (a)}_{\rm ig}=4$.

See Figs.~\ref{feedbk-1},~\ref{feedbk-2-3} and~\ref{feedbacks}(a1),(a2).
\begin{figure}[ht]
\begin{center}
      \epsfxsize=0.98\textwidth
      \epsfbox{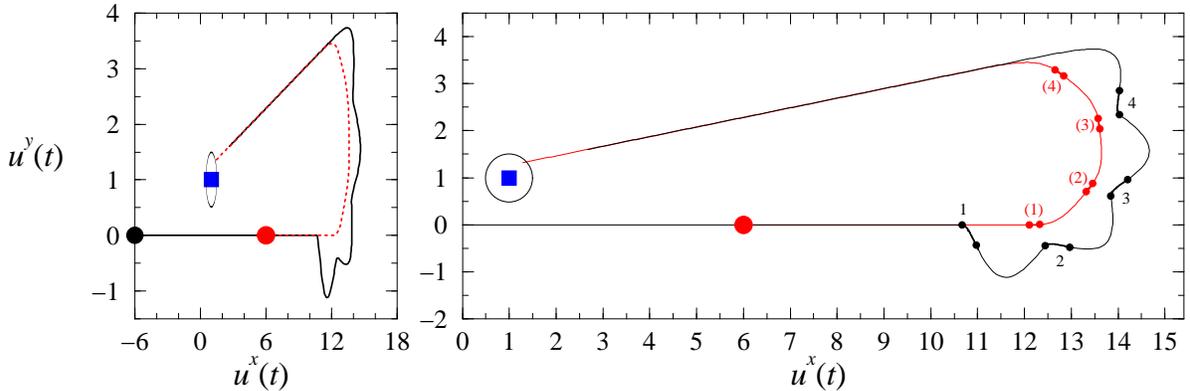}
\vspace{0.2cm}
\caption{Agents' trajectories for the feedback law $\kappa_{\rm F}(t)$ given
in~(\ref{fidbaclaw}), with alignment tolerance $\bar{a} = 4\times 10^{-1}$.
Left, whole trajectories; right, detail.
Wide solid segments denote intervals of time with active control ({\tt ON}).
Small solid circles correspond to onsets and ends of such intervals. Numbers denote
segments of the same interval of time:
$S_1 = [39.17,39.55]$, $S_2 = [41.54,41.89]$, $S_3 = [43.77,44.11]$ and $S_4 = [45.98,46.32]$. 
The cost is ${\cal C}_{\rm F}^{\rm (a)} = 1.43$.} 
\label{feedbk-1}
\vspace{-0.2cm}
\end{center}
\end{figure}

Figs.~\ref{feedbk-1} shows the trajectories of the agents (the whole trajectories in the
left panel, and a zoom of $u^x(t) \in [0,15]$ in the right panel). Surprisingly,
the trajectory of the driver exhibits an oscillatory behavior around the circular
trajectory of the evader, allowing the driver to remain closer to the evader than in
the previous cases (see, {\it e.g.}, Fig.~\ref{turnbacks}).
Moreover, the time spent with the control in active mode ($\kappa_{\rm F}=1$) is
surprisingly short compared to the time spent in this state in the open-loop control
$\kappa_{t_{\tt ON}}^{t_{\tt OFF}}(t)$.

\begin{figure}[ht]
\begin{center}
       \epsfxsize=0.95\textwidth
       \epsfbox{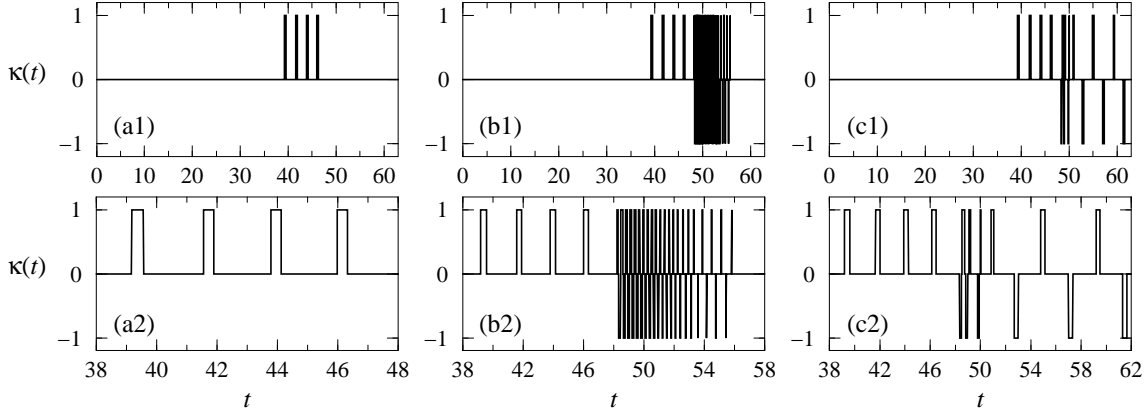}
\vspace{-.4cm}
\caption{Feedback control laws resulting from the three previous situations denoted
(a), (b) and (c) in Figs.~\ref{feedbk-1} and~\ref{feedbk-2-3}, showing the extreme
sensitivity of the
system under variations of the alignment $a^*$ and the switching times of the control
$\kappa(t)$. Lower panels (abc2): zoom of upper panels (abc1).
Resulting costs: ${\cal C}_{\rm F}^{\rm (a)} = 1.43$,
${\cal C}_{\rm F}^{\rm (b)} = 4.275$ and ${\cal C}_{\rm F}^{\rm (c)} = 4.13$.
Number of ignition processes are $N_{\rm ig}^{\rm (a)}=4$, $N_{\rm ig}^{\rm (b)}=43$
and $N_{\rm ig}^{\rm (c)}=16$.} 
\label{feedbacks}
\end{center}
\vspace{-.5cm}
\end{figure}

The improvement consists in that the circumvention mode is interrupted
($\kappa_{\rm F}=0$) during the surrounding motion of the driver around the evader,
reducing the time spent with $\kappa_{\rm F}=1$ to four small intervals $S_j$,
$j=1,\dots,4$, of total length ${\cal C}_{\rm F}^{\rm (a)} = 1.43$, and, accordingly,
$N_{\rm ig}^{\rm (a)} = 4$.
See the wide solid segments in the trajectories of the agents in Fig.~\ref{feedbk-1}
and Figs~\ref{feedbacks}(a1) and (a2), which show the resulting control function
$\kappa_{\rm F}(t)$ for $t \in [t_0,t_f]$, with $t_0=0$ and $t_f=63$.

The solution found in Fig.~\ref{feedbk-1} with the feedback law~(\ref{fidbaclaw}) can
indeed be considered a good solution of the control problem. However, this is not a
general situation, as shown by the wide range of cases analysed in our numerical
simulations, because of the high sensitivity exhibited by the system. Let us illustrate
this observation here by showing the results for two slightly different external
conditions; see the cases (b) and (c) in Figs.~\ref{feedbacks} and~\ref{feedbk-2-3}.

\begin{figure}[ht]
\begin{center}
\vspace{.7cm}
\hspace{-4.8cm}(b) \hspace{7cm} (c)
\vspace{-.7cm}
      \epsfxsize=0.9\textwidth
      \hspace{1.7cm}\epsfbox{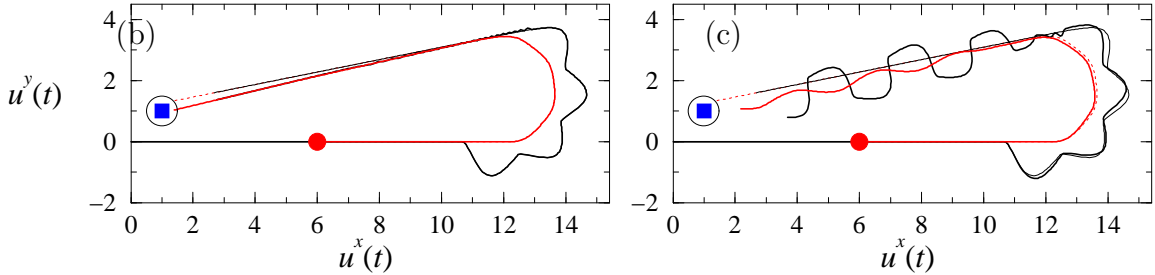}
\vspace{0.3cm}
\caption{Perturbations of the previous case depicted in Fig.~\ref{feedbk-1}:
(a) perturbation of the alignment tolerance: here $\bar{a} = 10^{-1}$ instead of
$4 \times 10^{-1}$, and
(b) perturbation of relative order $10^{-3}$ of the first interval of activation:
here $S'_1=[39.17,39.6]$ instead of $S_1=[39.17,39.55]$. For $t \ge 39.6$ the same
feedback law~(\ref{fidbaclaw}) is used.
In (c) the target is not reached because $t_f$ is too small; for a larger $t_f$, the
target is reached with no additional cost ($\kappa_{\rm F}=0$ during this extra-time).
Dashed lines represent the solution of case (a), depicted here for comparison.} 
\label{feedbk-2-3}
\end{center}
\vspace{-.2cm}
\end{figure}

Case (b) and (c) use an alignment tolerance $\bar{a}=10^{-1}$.
Fig.~\ref{feedbk-2-3} shows that the evader follows almost the same trajectory than
in case (a) (depicted in the figure to facilitate the comparison) and reaches the target
with a more accurate orientation. However, such a small deviation requires an enormous
increase of the use of the lateral propellers, as shown in Fig.~\ref{feedbacks}(b2).
Not only the cost of having the control set to 1 is larger,
${\cal C}_{\rm F}^{\rm (b)} = 4.275$
(and larger than with the open-loop controls), but a much greater number of ignition
processes is required ($N_{\rm ig}^{\rm (b)}=43$), moreover involving both the right and
the left propellers alternatively ({\it i.e.}, $\kappa_{\rm F} = +1, 0, -1, 0, +1,\dots$).

On the other hand, case (c) shows that small variations of the switching times of the
feedback control can produce huge qualitative differences in the behavior of the agents.
We have introduced a small perturbation of the first interval of activation
$S_1=[39.17,39.55]$, by keeping $\kappa = 1$ until 39.6 instead of until 39.55
(a perturbation of relative size $1-39.55/39.6 = 1.3 \times 10^{-3}$). For $t \ge 39.6$,
we use again the feedback law~(\ref{fidbaclaw}), so that a different control function
profile~arises.
The resulting behavior of the agents is depicted in Fig.~\ref{feedbk-2-3}(c).

Fig.~\ref{feedbk-2-3}(c)
shows that after the perturbation, the agents describe a widely deviated trajectory,
especially in the case of the driver, with respect to the one described with the
unperturbed feedback law shown in Fig.~\ref{feedbk-1}.
Accordingly, the perturbed control function profile, depicted in Fig.~\ref{feedbacks}(c),
is significantly different from the unperturbed feedback law.
The driver requires more time to reach the target ($t_f^{\rm c}=t_f+1=64$) than in
cases (a) and (b). Compared with case (b), the cost is higher, although not excessively,
${\cal C}_{\rm F}^{\rm (c)} = 4.13$ (with no additional cost for the extra time because
the evader moves in the right direction), and the number of ignition process is much
smaller: $N_{\rm ig}^{\rm (c)}=16$.

Perturbations of the rest of switching times produce similar results, that is,
qualitative deviations of the trajectories and a larger number of ignition processes,
with respect to the unperturbed case (a), meaning that, in cases where delays can appear
in the instants of time in which the control has to be manipulated, important changes of
the behavior of the system may arise.

The oscillatory behavior arisen in case (c) can appear to be less convenient but,
according to our numerical simulations, it is not necessarily worse than the smooth
evader-following trajectory; oscillations can contribute to reduce the number of
ignition processes, with a low increase of the cost; see left panels (a) in
Fig.~\ref{multitarget}.

\begin{figure}[ht]
\begin{center}
       \epsfxsize=0.95\textwidth
       \epsfbox{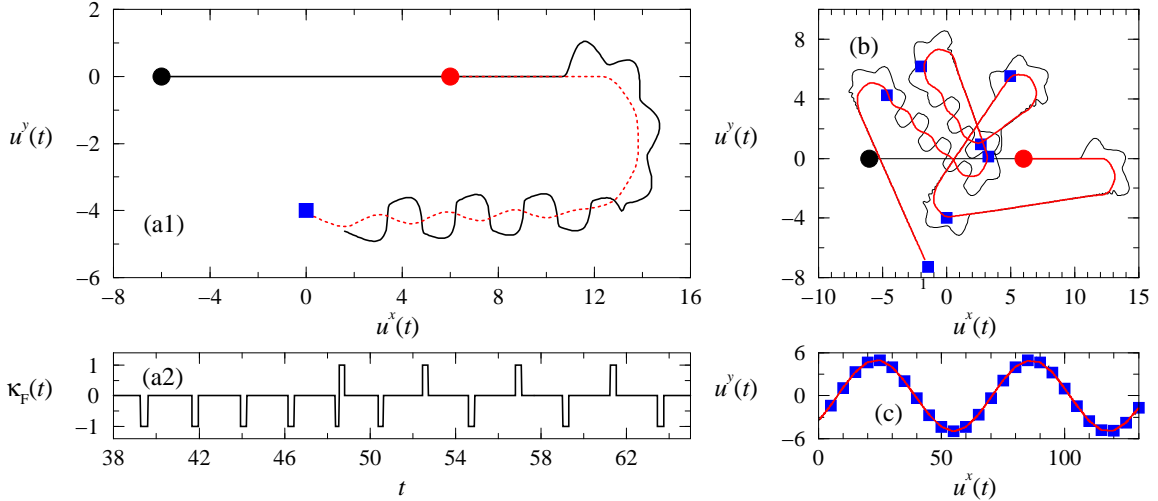}
\vspace{-.3cm}
\caption{Left panels: (a1) agents' trajectories and (a2) feedback control producing an
oscillatory behavior of the driver and a surprisingly low cost ${\cal C} \approx 4.1$.
Panel (b): agent's trajectories in a case where the path to follow is a series of 7
targets randomly distributed in a radius smaller than 8 from the original location of
the evader. Note that for some targets the trajectory is smooth ($T_1$, $T_2$, $T_3$,
$T_5$ and $T_7$) but for some others oscillations are necessary to adjust the alignment
($T_4$ and $T_6$).
Panel (c): evader's trajectory along a sinusoidal path described by a large number of
closely spaced targets.} 
\label{multitarget}
\end{center}
\end{figure}

The feedback control law is able to drive the evader along a given trajectory,
provided the trajectory is sufficiently smooth, that is, the trajectory can be described
by a series of targets; see Fig.~\ref{multitarget}(b) for a series of random targets,
and (c) for a sinusoidal trajectory described by a large number of closely spaced
targets. The study of the behavior of the system when the path to follow has a very high
curvature or describes very acute corners is matter for future work.

\section{Conclusion}
\label{IV}

We have presented an agent based model for the guidance by repulsion problem, consisting
of a system of equations corresponding to Newton's second law. The system can adopt two
operating modes that can be controlled by a single parameter $\kappa(t)$: for $\kappa=0$,
the system
is in the pure pursuit mode (control in mode ${\tt OFF}$, {\it i.e.}, resting state), and
for $\kappa=1$, the system is in the circumvention mode (control in mode ${\tt ON}$,
{\it i.e.}, active state). We have shown that by appropriately defining the function
$\kappa(t): [t_0,t_f] \to \{-1,0,1\}$, the driver can guide the evader to any desired
target or along any (relatively smooth) path. We have then formulated an optimal control
problem $(OCP)$ to find the optimal guidance strategy minimizing the cost in terms of the
number of times the system is activated from its resting state and the time the system
spends in the active mode. By means of (numerical) shooting methods, we have obtained
the optimal open-loop strategies for the case where the number of activations is equal
or smaller than one, finding that the system is highly sensitive to small variations of
the activation/deactivation times.

These results show that open-loop controls would
not be of practical interest in real problems due to the presence of external
perturbations, thus suggesting the use of a feedback law.
Taking advantage of the information provided in the study of the behavior of the system
under open-loop controls for $N_{\rm ig} \le 1$, we have designed a feedback law for the
case where $N_{\rm ig} > 1$, which allows to correct in real time for deviations from the
desired trajectory. We have found that the feedback law is also highly sensitive to small
variations of the conditions of the problem, in this case, of the accuracy with which the
target is reached ({\it i.e.}, the radius of the target ball). Moreover, our results
show that the feedback law, and therefore the resulting behavior of the system, are
highly sensitive to possible delays in the switching times of the control. This means
that, in systems or devices where the manipulation of the control cannot be carried out
at arbitrarily close instants of time, the behavior of the driver can exhibit large
oscillations that can produce an increase of the cost. This may happen in situations
where time delays exist in collecting and interpreting the data about the state of the
system or in the reaction time of the system once the control is changed, especially when
two consecutive changes are very close in time.

The main direction for the immediate future work consists in taking into account the cost
for the driver to get close to the evader, both in time and travelled distance. This
would correspond to add the cost of back propellers to the cost functional:
\begin{align}
 J_B(t_f) = \eta_1 \int_{t_0}^{t_f} \| \vec{u}_d(t) - \vec{u}_d(t_0)\|^2 \,dt + \eta_2 t_f,
\end{align}
and find a feedback control law for the two controls $\kappa(t)$ and $t_f$.

The interest of guidance by repulsion could also be extended to the case where the
evader's behavior has a stochastic component and when multiple agents (evaders and/or
drivers) are considered. The feedback law will be especially relevant when noise is
considered in both the behavior of the agents and in the manipulation of the data.


\section*{Acknowledgments}
We thank the anonymous reviewers for their interesting suggestions and for pointing out
the interest of open problem which include the cost of back propellers in the cost
functional.
This material is based upon work that has received funding from
the European Union’s Horizon 2020 research and innovation programme under
the Marie Sk\l{}odowska-Curie grant agreement No 655235, entitled ``SmartMass'', 
the Advanced Grant NUMERI-WAVES/FP7-246775 of the European Research Council Executive Agency,
the BERC 2014-2017 program and the pre-doctoral grant PRE-2014-1-461 of the Basque Government,
the FA9550-15-1-0027 of AFOSR,
the MTM2014-52347 and MTM2011-29306 Grants and the Severo Ochoa program SEV-2013-0323 of
the MINECO,
and a Humboldt Award at the University of Erlangen-N\"urnberg.

\appendix
\addtocontents{toc}{\setcounter{tocdepth}{0}}

\section{Asymptotic value of the velocities at long times when $\kappa = 0$}
\label{Long0}
Numerical simulations show that when $\kappa(t) = 0$ continuously for a sufficiently
large time, both agent's velocities converge asymptotically to the same constant velocity
$\vec{v}_{\rm sat}$. In that state, $\dot{\vec{v}}_d(t) = \dot{\vec{v}}_e(t) = 0$ and,
from Eqs.~(\ref{Eq-vp-d})--(\ref{Eq-ve-d}), we have
\begin{align}
- {C^E_D \over \nu_d}{\vec{u}_d - \vec{u}_e \over \delta_{\rm as}^2}
 \left[1 + {1 \over \delta_{\rm as}^2} 
 \left({C_{\rm R} \over C^E_D} \delta_1^4 - \delta_c^2 \right) \right]
  = - {C^D_E \over \nu_e} {\vec{u}_d - \vec{u}_e \over \delta_{\rm as}^2} = \vec{v}_{\rm as}.
\end{align}
Thus, comparing norms, we obtain $v_{\rm as} = C^D_E/(\nu_e \, \delta_{\rm as})$, and
extracting $\delta_{\rm as}$, we obtain
\begin{align}
 \delta_{\rm as} = \sqrt{ \nu_e(C^E_D \delta_c^2 - C_{\rm R} \delta_1^4)
		\over \nu_e C^E_D - \nu_d C^D_E \,},
\label{radis}
\end{align}
provided $\nu_e C^E_D>\nu_d C^D_E$, as it is the case for the values we are considering.
Note also that a necessary condition to have an effective short-range repulsion acting on
the driver is that the factor between large parentheses in Eq.~(\ref{Eq-vp-d}) is
positive, so $C^E_D \delta_c^2 - C_{\rm R} \delta_1^4 > 0$ and the radicand
in expression~(\ref{radis}) is positive.

\section{On the controllability of the driver-evader system}
\label{Lyap}
We show here that driver and evader are prevented from separating infinitely from each
other and tend asymptotically to be separated a distance of order one. The proof follows
the idea of ``free agents'' used in \cite{Shi-Xie2011} (see also \cite{EscobedoEtAl2014}
for a more similar model).
 
\begin{definition}
A driver agent $D$ is said to be a {\it free agent} at time $t$ if its distance to the
evader $E$ is greater than an arbitrarily large positive constant $\delta \gg 1$; that is,
$r(t) = \|\vec{u}_d(t) - \vec{u}_e(t)\| \geq \delta$.
\end{definition}

\begin{proposition}
If the driver $D$ is a free agent, then the system~(\ref{Eq-up})--(\ref{Eq-vs})
can be reduced as follows:
\begin{align}
\dot{\vec{u}}_d(t) & =  \vec{v}_d(t), \\
\dot{\vec{u}}_e(t) & =  \vec{v}_e(t) ,\\
\dot{\vec{v}}_d(t) & =  -  {C^E_D \over m_d}
 {\vec{u}_d(t) - \vec{u}_e(t) \over \|\vec{u}_d(t)-\vec{u}_e(t)\|^2}
  - {\nu_d \over m_d} \vec{v}_d(t),
\label{A-dvp}\\
\dot{\vec{v}}_e(t) & =  {C^D_E \over m_e}
 {\vec{u}_e(t) - \vec{u}_d(t) \over \|\vec{u}_e(t)-\vec{u}_d(t)\|^2}
  - {\nu_e \over m_e} \vec{v}_e(t).
\label{A-dve}
\end{align}
\end{proposition}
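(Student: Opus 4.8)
The plan is to establish the reduction contribution by contribution, using the free-agent hypothesis $r(t) \geq \delta \gg 1$ together with the order-of-magnitude estimates already recorded in Remarks~1 and~2. The evader equation requires no argument: Eq.~(\ref{Eq-vs}) contains only the repulsion exerted by the driver and the linear friction, neither of which involves $\kappa$ or the critical distance $\delta_c$, so it coincides verbatim with~(\ref{A-dve}). The entire content of the proposition therefore lies in reducing the driver equation~(\ref{Eq-vp}), for which I would work from its equivalent rewritten form~(\ref{Eq-vp-d}).

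For the driver, I would compare the magnitude of each contribution in~(\ref{Eq-vp-d}) against the leading attraction term $-(C^E_D/m_d)(\vec{u}_d - \vec{u}_e)/r^2$, whose norm is $C^E_D/(m_d r) = {\cal O}(r^{-1})$. The bracketed correction in~(\ref{Eq-vp-d}) carries a factor $r^{-2}(C_R \delta_1^4/C^E_D - \delta_c^2)$ and hence contributes a term of norm ${\cal O}(r^{-3})$, which simultaneously accounts for the $\delta_c^2$ correction of the attraction--repulsion force and the collinear part of the circumvention force. The remaining, perpendicular circumvention term proportional to $\kappa$ has norm $C_R \delta_1^4 \delta_2/(m_d r^4) = {\cal O}(r^{-4})$, exactly as computed in Remark~2 (using $\|(\vec{u}_d - \vec{u}_e)^\bot\| = r$). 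Both families of dropped terms are thus of strictly lower order than the retained attraction term, while the friction term $-(\nu_d/m_d)\vec{v}_d$ does not scale with $r$ and is kept identically.

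Because the driver is a free agent, $r \geq \delta$, so the ratio of each ${\cal O}(r^{-3})$ correction to the leading term is bounded by ${\cal O}(\delta^{-2})$, and the ratio of the ${\cal O}(r^{-4})$ circumvention term is bounded by ${\cal O}(\delta^{-3})$. Since $\delta$ may be taken as large as the definition of a free agent permits, these ratios are made arbitrarily small uniformly along the trajectory, so~(\ref{Eq-vp}) collapses to~(\ref{A-dvp}) up to terms negligible against the leading-order dynamics. Combined with the exact evader equation, this yields the stated reduced system~(\ref{A-dvp})--(\ref{A-dve}).

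The proof presents no genuine obstacle beyond making precise the sense in which the system is \emph{reduced}: the identities in~(\ref{A-dvp})--(\ref{A-dve}) are asymptotic, holding in the free-agent regime where the neglected terms are negligible relative to the dominant attraction and friction. The mild point to be careful about is that the estimate must hold uniformly in $t$ as long as $D$ remains a free agent; this is immediate here because each bound depends on the trajectory only through $r(t) \geq \delta$. Once this interpretation is fixed, the statement is a direct corollary of the order estimates of Remarks~1 and~2 under the standing hypothesis $r \geq \delta \gg 1$.
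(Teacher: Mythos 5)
Your argument is correct and is essentially the paper's own proof, which simply cites Remark~1: you expand the same order-of-magnitude comparison (attraction ${\cal O}(r^{-1})$ versus the $\delta_c$ and collinear-circumvention corrections ${\cal O}(r^{-3})$ and the perpendicular $\kappa$ term ${\cal O}(r^{-4})$ from Remark~2) under the free-agent hypothesis $r \geq \delta \gg 1$, and correctly note that the evader equation is untouched. Your added remark that the reduction is asymptotic rather than exact, with ratios controlled uniformly by $\delta$, only makes precise what the paper leaves implicit.
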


\begin{proof}
Using Remark~1.
\end{proof}

\begin{lemma} If $m_d/\nu_d < m_e/\nu_e$, then, $\forall t >0$,
\begin{align}
\norm{\vec{v}_e(t)}^2 \le \frac{C_E^D}{C_D^E} \frac{m_d}{m_e} \norm{\vec{v}_d(t)}^2
\Longrightarrow
\norm{\vec{v}_e(t)}^2 \le \frac{C_E^D}{C_D^E} \frac{\nu_d}{\nu_e} \norm{\vec{v}_d(t)}^2.
\end{align}
\label{lema1}
\end{lemma}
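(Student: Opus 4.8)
The plan is to observe that this lemma is purely a comparison between the two coefficients appearing on each side of the implication, and that the hypothesis $m_d/\nu_d < m_e/\nu_e$ is precisely what makes the bound on the left stronger (i.e.\ smaller) than the bound on the right. Since the physical constants $m_d,m_e,\nu_d,\nu_e,C_E^D,C_D^E$ are all strictly positive, I can manipulate the hypothesis as an inequality between positive numbers without any concern about sign reversals. No feature of the dynamics is needed; the statement holds at each fixed $t$ by elementary algebra.

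First I would rewrite the hypothesis. Cross-multiplying $m_d/\nu_d < m_e/\nu_e$ by the positive quantity $\nu_d\nu_e$ gives $m_d\nu_e < m_e\nu_d$, and dividing by the positive quantity $m_e\nu_e$ yields the equivalent form
\[
\frac{m_d}{m_e} < \frac{\nu_d}{\nu_e}.
\]
This is the key reordering: under the stated assumption the factor $m_d/m_e$ is genuinely smaller than $\nu_d/\nu_e$, which is consistent with the chosen dataset ($m_d/m_e = 0.4 < 0.5 = \nu_d/\nu_e$).

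Next I would multiply this inequality by the nonnegative scalar $\tfrac{C_E^D}{C_D^E}\,\|\vec{v}_d(t)\|^2$. Because $C_E^D/C_D^E>0$ and $\|\vec{v}_d(t)\|^2\ge 0$, the direction of the inequality is preserved, so
\[
\frac{C_E^D}{C_D^E}\frac{m_d}{m_e}\,\|\vec{v}_d(t)\|^2
\;\le\;
\frac{C_E^D}{C_D^E}\frac{\nu_d}{\nu_e}\,\|\vec{v}_d(t)\|^2 .
\]
(If $\|\vec{v}_d(t)\|^2=0$ both sides vanish and the assumed bound forces $\|\vec{v}_e(t)\|^2=0$, so the conclusion is immediate; hence I may as well assume $\|\vec{v}_d(t)\|^2>0$.) Chaining this comparison with the assumed bound $\|\vec{v}_e(t)\|^2\le \tfrac{C_E^D}{C_D^E}\tfrac{m_d}{m_e}\|\vec{v}_d(t)\|^2$ yields $\|\vec{v}_e(t)\|^2\le \tfrac{C_E^D}{C_D^E}\tfrac{\nu_d}{\nu_e}\|\vec{v}_d(t)\|^2$ at every $t>0$, which is exactly the desired conclusion.

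I do not expect any genuine obstacle here, since the implication reduces to the ordering of two constant coefficients. The only point deserving care is confirming that $m_d/\nu_d<m_e/\nu_e$ reorders the coefficients in the claimed direction, namely $m_d/m_e<\nu_d/\nu_e$ rather than the reverse; the cross-multiplication in the first step settles this unambiguously.
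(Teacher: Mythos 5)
Your argument is correct: the hypothesis $m_d/\nu_d < m_e/\nu_e$ is equivalent (all constants being positive) to $m_d/m_e < \nu_d/\nu_e$, so the right-hand bound dominates the left-hand one after multiplying by the nonnegative factor $\tfrac{C_E^D}{C_D^E}\,\|\vec{v}_d(t)\|^2$, and the implication follows by transitivity. The paper states this lemma without supplying a proof, and your elementary coefficient comparison is exactly the justification it implicitly relies on.
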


\begin{definition}
Let ${\bf q} = (\vec{u}_d, \vec{v}_d,\vec{u}_e, \vec{v}_e) \in \mathbb{R}^8$
and $V({\bf q})$: $\mathbb{R}^8 \rightarrow  \mathbb{R} $ be the following
potential functional:
\begin{align}
V({\bf q}) & =  \ln (\norm{\vec{u}_d-\vec{u}_e}) + \frac{1}{2} \frac{m_d}{C^E_D} \norm{\vec{v}_d}^2
-  \frac{1}{2} \frac{m_e}{C^D_E} \norm{\vec{v}_e}^2.
\end{align}
Then, for free agents ({\it i.e.}, $r(t) \ge \delta$) and under the hypotheses of Lemma~\ref{lema1},
$V({\bf q})$ is positive.
\end{definition}

\begin{theorem}
If the driver $D$ is a free agent, then $V({\bf q})$ is bounded from below and
$\dot{V}({\bf q})$ is negative along the agents' trajectories defined by the
system~(\ref{Eq-up})--(\ref{Eq-vs}). Consequently, $V({\bf q}(t))$
converges in time to a minimum which is reached when the distance between both agents
is~$\delta$.
\label{teolyapunov}
\end{theorem}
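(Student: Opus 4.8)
The plan is to read $V$ as a Lyapunov function for the free-agent dynamics. Since $D$ is assumed to be a free agent, the Proposition above lets me replace the full system by the reduced equations~(\ref{A-dvp})--(\ref{A-dve}), so I work exclusively with those. The lower bound is then immediate from the Definition preceding the statement: under the hypotheses of Lemma~\ref{lema1} and while $r(t) \ge \delta$, one has $V(\mathbf{q}) > 0$, because $\ln r \ge \ln \delta > 0$ (recall $\delta \gg 1$) and the premise $\norm{\vec v_e}^2 \le \tfrac{C_E^D}{C_D^E}\tfrac{m_d}{m_e}\norm{\vec v_d}^2$ of Lemma~\ref{lema1} forces the kinetic part $\tfrac12\tfrac{m_d}{C^E_D}\norm{\vec v_d}^2 - \tfrac12\tfrac{m_e}{C^D_E}\norm{\vec v_e}^2$ to be nonnegative. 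Hence the only real content is the sign of $\dot V$ and the convergence claim.

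First I would differentiate $V$ along trajectories of~(\ref{A-dvp})--(\ref{A-dve}). From $r^2 = \norm{\vec u_d - \vec u_e}^2$ one gets $\tfrac{d}{dt}\ln r = (\vec u_d - \vec u_e)\cdot(\vec v_d - \vec v_e)/r^2$, while differentiating each kinetic term and inserting the force laws produces, per agent, a conservative piece proportional to $(\vec u_d - \vec u_e)\cdot \vec v_i / r^2$ coming from the attraction/repulsion force, plus a friction piece. The short but decisive observation is that the three conservative contributions cancel exactly, leaving only the dissipative part
\begin{align}
\dot V = - \frac{\nu_d}{C^E_D}\norm{\vec v_d}^2 + \frac{\nu_e}{C^D_E}\norm{\vec v_e}^2 .
\end{align}

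Next I would invoke the \emph{conclusion} of Lemma~\ref{lema1}, namely $\norm{\vec v_e}^2 \le \tfrac{C_E^D}{C_D^E}\tfrac{\nu_d}{\nu_e}\norm{\vec v_d}^2$, which is precisely $\tfrac{\nu_e}{C^D_E}\norm{\vec v_e}^2 \le \tfrac{\nu_d}{C^E_D}\norm{\vec v_d}^2$, so that $\dot V \le 0$, strictly negative away from the asymptotic equilibrium relation. Together with the lower bound, $V(\mathbf q(t))$ is non-increasing and bounded below, hence converges by monotone convergence. I expect the delicate step to be the last assertion, that this limit is the minimum attained at $r = \delta$: since the kinetic part is nonnegative and $\ln r \ge \ln \delta$ on the free region, the infimum of $V$ there equals $\ln\delta$ and is reached only when $r = \delta$ with vanishing kinetic contribution. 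Ruling out that $V$ could stabilize at a larger value with $r$ remaining large requires an invariance-principle argument: the velocities are bounded (by the previous remarks), so $\dot V$ is uniformly continuous and a Barbalat/LaSalle argument forces the trajectory into the set $\{\dot V = 0\}$, whence $r(t)$ is driven back toward $\delta$ and the agents cannot separate to infinity. I anticipate this bookkeeping — not the derivative computation, whose cross terms cancel cleanly — to be where the care is needed.
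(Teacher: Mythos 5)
Your proposal follows essentially the same route as the paper: the same Lyapunov-type computation in which the conservative cross terms from $\tfrac{d}{dt}\ln r$ and the two kinetic terms cancel, leaving $\dot V = -\tfrac{\nu_d}{C^E_D}\norm{\vec v_d}^2 + \tfrac{\nu_e}{C^D_E}\norm{\vec v_e}^2$, whose sign is then controlled by the conclusion of Lemma~\ref{lema1}. The only difference is that you are more explicit than the paper about the final step (the paper simply asserts that the minimum is attained at $r=\delta$ with vanishing velocities, whereas you note that an invariance/Barbalat argument is needed to rule out stabilization at a larger value), which is a welcome refinement rather than a different approach.
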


\begin{proof}
The time-derivative of $V({\bf q})$ along the agents' trajectories is given by:
\begin{align}
\dot{V}({\bf q}) =  \nabla_{\vec{u}_d} V \cdot \frac{d \vec{u}_d}{dt}
+ \nabla_{\vec{v}_d} V \cdot \frac{d \vec{v}_d}{dt} 
+ \nabla_{\vec{u}_e} V \cdot \frac{d \vec{u}_e}{dt}
+ \nabla_{\vec{v}_e} V \cdot \frac{d \vec{v}_e}{dt},
\qquad \\
\mbox{where} \quad
\nabla_{\vec{u}_d} V = - \nabla_{\vec{u}_e} V
= {\vec{u}_d - \vec{u}_e \over \|\vec{u}_d-\vec{u}_e\|^2}, \quad
\nabla_{\vec{v}_d} V = \frac{m_d}{C^E_D} \vec{v}_d \quad \mbox{and} \quad
\nabla_{\vec{v}_e} V = - \frac{m_e}{C^D_E} \vec{v}_e.
\end{align}
Then:
\begin{align}
\dot{V}({\bf q}) =
& \quad {\vec{u}_d - \vec{u}_e \over \|\vec{u}_d-\vec{u}_e\|^2} \cdot \vec{v}_d
+ \frac{m_d}{C^E_D} \vec{v}_d \cdot
  \left( - \frac{C^E_D}{m_d} {\vec{u}_d - \vec{u}_e \over \|\vec{u}_d-\vec{u}_e\|^2}
  - \frac{\nu_d}{m_d} \vec{v}_d\right)
\\
& - {\vec{u}_d - \vec{u}_e \over \|\vec{u}_d-\vec{u}_e\|^2} \cdot \vec{v}_e
- \frac{m_e}{C_E^D} \vec{v}_e \cdot
  \left( - \frac{C^D_E}{m_e} {\vec{u}_d - \vec{u}_e \over \|\vec{u}_d-\vec{u}_e\|^2}
  - \frac{\nu_e}{m_e} \vec{v}_e\right)
\\
= & - \frac{\nu_d}{C^E_D} \| \vec{v}_d \|^2 + \frac{\nu_e}{C^D_E} \| \vec{v}_e \|^2,
\end{align}
which, under the conditions of Lemma~\ref{lema1}, is negative.
Then $V({\bf q})$ decreases and is bounded from below, so $V({\bf q})$ has a minimum,
which is reached when $\| \vec{v}_d(t) \| = \| \vec{v}_e(t) \| = 0$ and
$r(t) = \| \vec{u}_d(t) - \vec{u}_e(t) \| = \delta$ (which is the minimum value of
$r(t)$ for a free agent).
\end{proof}

Thus, agents are prevented from separating infinitely from each other because as soon as
$r(t) \ge \delta$, the driver becomes a free agent and is forced to move back towards the
evader, provided the balance between the mass and the friction of the agents verifies
Lemma~\ref{lema1}.

\section{Simple shooting method}
\label{Shoot}

Once the evader is turning back towards the target, that is, $v_e^x(t) <0$, we check
the direction of the velocity vector of the evader with the line $\overline{ET}$
described by the evader and the target. Then, if for some time $t \in (t_0,t_f)$ the
vector $\vec{v}_e(t)$ points towards a point located below $T$, the tentative value
of $t_{\tt OFF}$ must be reduced; if instead, at the final time $t_f$, the vector
$\vec{v}_e(t)$ points towards a point located above $T$, then the tentative value
of $t_{\tt OFF}$ must be augmented.

That is: Given a value of $t_{\tt ON}$, take an initial value of
$t_{\tt OFF}$ larger than $t_{\tt ON}$ and:
\begin{enumerate}
 \item[{\tt 0.}] Solve the system (\ref{Eq-up})--(\ref{CondInis}) with
 $\kappa_{t_{\tt ON}}^{t_{\tt OFF}}(t)$ for $t \in (t_0,t_f)$ and anotate the value of
 $t_b$, which is the first time such that $v_e^x(t_b)<0$. If no such time is
 reached, this means that $t_{\tt OFF}<t_b$, so take a larger value of $t_{\tt OFF}$ and
 shoot again ({\tt goto~0}).

 \item[{\tt 1.}] For each time $t \in (t_b,t_f)$, evaluate the instantaneous alignment
 $\alpha(t)$ of the velocity vector of the evader $\vec{v}_e(t)$ with respect to the
 target point $\vec{u}_T$:
\begin{align}
 \alpha(t) = \big(u_T^y - u_e^y(t)\big) v_e^x(t)
           - \big(u_T^x - u_e^x(t)\big) v_e^y(t).
\end{align}
 Then, if $\alpha(t) < 0$, take a smaller value of $t_{\tt OFF}$ and shoot again ({\tt goto~0}).

 \item[{\tt 2.}] If at time $t_f$ the velocity vector of the evader is still pointing above
 the target, that is, $\alpha(t_f) > 0$, then take a larger value of $t_{\tt OFF}$ and
 shoot again ({\tt goto~0}).
\end{enumerate}
The new value of $t_{\tt OFF}$ for the next shoot can be selected with a simple method
({\it e.g.}, bisection).\\
{\tt Stop} when $|\alpha(t)| < \epsilon$, for a small value of the tolerance $\epsilon$;
the value of $t^*_{\tt OFF}(t_{\tt ON})$ has been found, proceed to the next value of
$t_{\tt ON}$.




\end{document}